\newcommand*{\NN}{\mathbb{N}}
\newcommand*{\RR}{\mathbb{R}}
\newcommand*{\LL}{\mathcal{L}}
\newcommand*{\xrarr}[2][]{\xrightarrow[#2]{#1}}
\newcommand*{\sset}{\subseteq}
\newcommand*{\smin}{\setminus}
\newcommand*{\deq}{:=}
\newcommand*{\e}{\varepsilon}
\newcommand*{\g}{\gamma}
\renewcommand*{\d}{\mathrm{d}}
\newcommand*{\1}{\chi}
\newcommand*{\pd}{\partial}
\theoremstyle{plain}
\newtheorem{thm}{Theorem}
\newtheorem{proposition}[thm]{Proposition}
\newtheorem{lemma}[thm]{Lemma}
\theoremstyle{definition}
\theoremstyle{remark}
\newtheorem{rmk}[thm]{Remark}
\renewcommand{\l}{\lambda}
\renewcommand{\O}{\Omega}
\newcommand*{\bra}[1]{\left\lbrace #1 \right\rbrace}
\def\blfootnote{\gdef\@thefnmark{}\@footnotetext}
\title{A localized boundary deformation\\which splits the spectrum of the Laplacian}
\date{June 12, 2017}
\author{Alexander Dabrowski\footnote{Department of Mathematics, ETH Z\"urich, Switzerland.
%The author gratefully acknowledges Prof.~P.~D.~Lamberti for the fruitful correspondence.
}}
\begin{document}
  
\maketitle
\blfootnote{\textit{Mathematics subject classification}: 35J25, 35P15, 58C40.}
 
\begin{abstract}
For any Lipschitz domain we construct an arbitrarily small, localized perturbation which splits the spectrum of the Laplacian into simple eigenvalues.
We use for this purpose a Hadamard's formula and spectral stability results.
\end{abstract}

\section{Introduction}

In the seminal works \cite{MichelettiLaplace} and  \cite{Uhlenbeck}, respectively Micheletti and Uhlenbeck showed that the eigenvalues of the Dirichlet Laplacian are generically simple in the space of smooth manifolds equipped with the $C^k$-topology (see also the survey papers \cite[Section 4.3]{BurenkovLambertiCristoforis}, \cite[Section 1.3]{Hale} and references therein for related works).
In this paper we generalize this result to Lipschitz domains  and show that a stronger, localized version holds as follows.

\begin{thm}
\label{T:EllipticLocalSpectrumGen}
For any Lipschitz domain $\O$, $\e > 0 $, and $x$ on the boundary $\pd \O$, there exists a domain $\tilde \O$ whose symmetric difference with $ \O $ is contained in the ball of radius $\e$ centered at $x$, and whose (Dirichlet, Neumann, or Robin) Laplacian eigenvalues are all simple.
Moreover $\tilde \O$ can be constructed so that the Lipschitz constant of $\pd \tilde \O$ is arbitrarily near to the one of $\pd \O$.
\end{thm}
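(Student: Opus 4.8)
I would run a Baire--category argument on a Banach space of localized boundary perturbations, reducing everything to a single non-degeneracy property of Hadamard's formula that comes from unique continuation. As a first, harmless step, since we only need $\mathrm{Lip}(\partial\tilde\O)$ close to $\mathrm{Lip}(\partial\O)$ I would apply a preliminary perturbation supported in a small ball inside $B_\e(x)$: writing $\partial\O$ near $x$ as a Lipschitz graph $\{y_n=\psi(y')\}$ and replacing $\psi$ by $(1-\1)\psi+\1\,\psi_\delta$ with $\psi_\delta$ a mollification of $\psi$ and $\1$ a fixed cutoff, one obtains as $\delta\to0$ a domain that coincides with $\O$ outside that ball, has a relatively open $C^\infty$ piece $\Gamma\sset B_\e(x)$ in its boundary, and whose Lipschitz constant exceeds that of $\partial\O$ by only $o(1)$. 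Keeping the name $\O$ for this domain, all later perturbations are supported in $\Gamma$, where the eigenfunctions are smooth up to the boundary, so $\partial_\nu u$ is an honest function and Cauchy data make sense.

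\emph{The Baire setup.} Let $X$ be the Banach space of $C^2$ functions with compact support in $\Gamma$; for small $g\in X$ let $\O_g$ be obtained by displacing $\Gamma$ by $g\nu$ and leaving $\partial\O$ unchanged elsewhere. Then $\O_0=\O$, the domains $\O_g$ and $\O$ differ only inside $B_\e(x)$, and $\mathrm{Lip}(\partial\O_g)\to\mathrm{Lip}(\partial\O)$ as $\|g\|\to0$; by the spectral stability results each $\lambda_n(\O_g)$ is continuous in $g$. For $N\in\NN$ put
\[
\mathcal{U}_N:=\{\,g\in X:\ \|g\|<\delta_0,\ \lambda_1(\O_g)<\lambda_2(\O_g)<\dots<\lambda_N(\O_g)<\lambda_{N+1}(\O_g)\,\},
\]
the set of $g$ for which the first $N$ eigenvalues are simple; by continuity $\mathcal{U}_N$ is open. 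If each $\mathcal{U}_N$ is dense in $\{\|g\|<\delta_0\}$, then $\bigcap_N\mathcal{U}_N$ is a dense $G_\delta$, hence nonempty, and any element yields a $\tilde\O=\O_g$ with all eigenvalues simple. (Intersecting the three such families for the Dirichlet, Neumann and Robin problems, one even gets a single $\tilde\O$ that works for all three.) So it remains to prove density of each $\mathcal{U}_N$.

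\emph{Density by splitting one eigenvalue at a time.} Take $g_0$ with some of $\lambda_1(\O_{g_0}),\dots,\lambda_N(\O_{g_0})$ not simple and let $\lambda=\lambda_k=\dots=\lambda_{k+m-1}$ ($m\ge2$) be the lowest, so $\lambda_{k-1}<\lambda<\lambda_{k+m}$; fix an $L^2$-orthonormal basis $u_1,\dots,u_m$ of its eigenspace. For $\eta\in X$, perturbation theory for the family $s\mapsto\O_{g_0+s\eta}$ gives, for small $s$, exactly $m$ eigenvalues near $\lambda$, of the form $\lambda+s\,\mu_j(\eta)+o(s)$, where the $\mu_j(\eta)$ are the eigenvalues of the symmetric matrix furnished by Hadamard's formula --- in the Dirichlet case
\[
M(\eta)_{ij}=-\int_\Gamma \eta\,(\partial_\nu u_i)(\partial_\nu u_j)\,\d\sigma,
\]
with the analogous boundary matrix (tangential gradients plus lower-order terms) for Neumann and Robin. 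If $M(\eta)$ has $m$ distinct eigenvalues, then for small $s>0$ these $m$ eigenvalues spread apart while remaining in a tiny neighbourhood of $\lambda$ --- hence strictly between $\lambda_{k-1}$ and $\lambda_{k+m}$ --- and $\lambda_1,\dots,\lambda_{k-1}$ stay simple by openness; so the index of the lowest non-simple eigenvalue among the first $N$ strictly increases. Iterating at most $N$ times and composing these small, $\Gamma$-supported perturbations produces a $g$ arbitrarily close to $g_0$ in $\mathcal{U}_N$.

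\emph{The non-degeneracy lemma, and the main difficulty.} It remains to make $M(\eta)$ have distinct eigenvalues. In the Dirichlet case set $\phi(y)=(\partial_\nu u_1(y),\dots,\partial_\nu u_m(y))\in\RR^m$, so that $M(\eta)=-\int_\Gamma\eta\,\phi\phi^T$; letting $\eta$ concentrate near points, the realizable matrices span $\lsp\{\phi(y)\phi(y)^T:y\in\Gamma\}$. Unique continuation supplies the non-degeneracy: the Gram matrix $P:=\int_\Gamma\phi\phi^T\,\d\sigma$ satisfies $v^TPv=\int_\Gamma|\partial_\nu(v\cdot u)|^2\,\d\sigma$, which vanishes only if the eigenfunction $v\cdot u$ has zero Cauchy data on the open set $\Gamma$, forcing $v\cdot u\equiv0$ and $v=0$; hence $P$ is positive definite, and in particular $\{\phi(y):y\in\Gamma\}$ spans $\RR^m$. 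Choosing $y_1,\dots,y_m$ with $\phi(y_1),\dots,\phi(y_m)$ a basis and taking $\eta$ concentrated near the $y_i$ with weights of size $t^i$ (and the appropriate sign), $M(\eta)$ approximates $\sum_{i=1}^m t^i\phi(y_i)\phi(y_i)^T$, whose eigenvalues are of pairwise distinct order as $t\to\infty$, hence distinct; since having distinct eigenvalues is an open condition, a genuine such $\eta$ works. For Neumann and Robin the same scheme applies once one checks that the corresponding boundary quadratic forms cannot all degenerate simultaneously; extracting this from unique continuation for the Cauchy data on $\Gamma$ is, I expect, the one real obstacle --- the rest (Baire category, openness of simplicity, the first-order perturbation expansion, and the preliminary smoothing) being soft or routine.
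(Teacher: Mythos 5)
Your architecture is genuinely different from the paper's and, as an architecture, it is sound. The paper does not use Baire category: it builds one explicit domain by an infinite iteration, placing a sequence of disjoint shrinking balls on a flattened piece of $\pd\O$ and splitting the first non-simple eigenvalue in each, and it therefore needs quantitative control of the \emph{whole} spectrum at every stage (the uniform stability estimate of \Cref{T:QuantEstEigVar} combined with Weyl's law, in \Cref{T:SingleSplit}) to rule out collisions among high eigenvalues in the limit. Your dense-$G_\delta$ scheme only ever tracks the first $N$ eigenvalues at stage $N$, so all of that machinery becomes unnecessary: openness of simplicity plus continuity of $\l_n(\O_g)$ suffices. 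That is a real simplification, at the price of losing the explicit construction. Your preliminary smoothing step plays the same role as the paper's flattening in \Cref{T:LocPertFlat}, and your Dirichlet non-degeneracy argument (positive definiteness of the Gram matrix of $(\pd_\nu u_1,\dots,\pd_\nu u_m)$ on $\Gamma$ via Holmgren, then weights $t^i$ at spanning points) is correct and standard. Two small points to tidy: $C^2_c(\Gamma)$ with varying support is not a Banach space (fix a compact $K\Subset\Gamma$), and if for Neumann/Robin you only manage to reduce the multiplicity rather than fully split $M(\eta)$, your induction should be on the pair (index of lowest non-simple eigenvalue, its multiplicity) rather than on the index alone.

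The genuine gap is exactly the one you flag, and it is not a technicality --- it is the heart of the theorem for two of the three boundary conditions in the statement. For Neumann, $\pd_\nu u_i=0$ on $\Gamma$, so the Hadamard matrix density is $A(y)_{ij}=\nabla_{\!S}u_i\cdot\nabla_{\!S}u_j-\l u_iu_j$, an \emph{indefinite} quadratic form in the Cauchy data; unlike the Dirichlet case it is not a Gram matrix, $v^TA(y)v$ can vanish for nonzero $v$ at individual points, and positivity/Holmgren arguments do not directly apply. What must be excluded is that $A(y)$ is a scalar multiple of the identity for every $y\in\Gamma$, i.e.\ that $|\nabla u|^2-cu^2$ takes the same value on $\Gamma$ for all unit vectors in the eigenspace. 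The paper's \Cref{T:Gradu-lunotconst} does precisely this, and not by unique continuation alone: it flows along $\dot\g_t=C\nabla u(\g_t)$ so that the hypothesized identity $|\nabla u|^2=cu^2$ (or its two-eigenfunction version) becomes a Riccati-type ODE $\frac{d}{dt}u(\g_t)=Cc\,u(\g_t)^2$ forcing finite-time blow-up of a bounded eigenfunction --- a contradiction. Holmgren enters only to secure a starting point with $u(x_0)\neq 0$ and to handle degenerate subcases (e.g.\ $c=\sigma^2$ for Robin). Without this lemma, or a substitute for it, your proof covers only the Dirichlet case; with it, your density step goes through for all three conditions and the rest of your argument stands.
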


More in detail the structure of the paper is the following.
In \Cref{sect:pre} we review some preliminary material, in particular regarding spectral stability.
In \Cref{sect:Hadamard} we recall a Hadamard's formula and study some independence properties of eigenfunctions and their gradients at the boundary.
More in detail, Hadamard's formula provides us with a first-order estimate on the shift of an eigenvalue $\l$ which depends on the value of
\begin{equation}
\label{eq:u-lu}
|\nabla u|^2 - c u^2
\end{equation}
at the boundary of the domain considered, where $u$ is an eigenfunction associated to $\l$ and $c$ is a constant which depends only on the choice of boundary conditions.
By showing that for two orthogonal eigenfunctions the corresponding values of \eqref{eq:u-lu} in any open subset of the boundary must differ at least at a point, we are able to construct a localized perturbation which splits any non-simple eigenvalue.
However, even when small, this perturbation might cause the shift and the overlap of other eigenvalues. 
This possibility is ruled out in \Cref{sect:proofthm}, where uniform bounds for the whole spectrum are adapted to our case from sharp stability estimates from \cite{BurenkovLamberti2}.
In conclusion, these bounds allow the construction of a localized perturbation, which consists of a sequence of small ``bumps" at the boundary of the domain considered, which proves \Cref{T:EllipticLocalSpectrumGen}.

\section{Notations and preliminary results}
\label{sect:pre}

In this section we fix the main notation which will be used in the paper and recall some preliminary results on eigenvalues and eigenfunctions of the Laplacian.
Regarding the notation:
\begin{itemize}
\item we say that $X$ is a domain if $X$ is an open, bounded, and connected subset of $\RR^N$;
\item we say that $\l$ is an eigenvalue of a domain $X$ with associated eigenfunction $u$ (assumed to be not constant zero) if 
\begin{equation}
\label{eq:LaplaceEigen}
\Delta u + \l u = 0  \qquad \text{in } X,
\end{equation}

and either one of the following homogeneous boundary conditions is satisfied on $\pd X$:
\begin{equation}
\label{eq:bdyCond}
\begin{cases}
u = 0 & \text{(Dirichlet)}, \\[2pt]
\dfrac{\pd u}{\pd \nu} = 0 & \text{(Neumann)}, \\[4pt]
\sigma u = \dfrac{\pd u}{\pd \nu} & \text{(Robin)},
\end{cases}
\end{equation}
where $\sigma $ is a fixed non-zero constant and $\nu$ indicates the outward unit normal vector.
\item we indicate as $\O$ a fixed domain with Lipschitz boundary.
\end{itemize}

We actually require  \eqref{eq:LaplaceEigen} and \eqref{eq:bdyCond} to be satisfied only in a weak sense, that is: $\l $ is an eigenvalue of $X$ with associated eigenfunction $u$, if $u$ is an element of a function space $V(X)$ and
$$Q(u,v) = \l \int_X u v , \qquad \text{ for every } v \in V(X),$$
where, depending on the choice of boundary conditions, we have
\begin{equation}
\label{eq:weakbdCond}
\begin{tabular}{c|c|c}
Boundary conditions &  $Q(u,v)$ & $V(X)$\\[2pt]
\hline & & \\[-9pt]
Dirichlet & $\int_X \nabla u \cdot  \nabla v $ & $\bra{ u \in H^1(X) : \text{trace of } u \text{ at } \pd X \text{ is } 0}  $\\[3pt]
Neumann & $\int_X \nabla u \cdot \nabla v $ & $ H^1(X)  $\\[3pt]
Robin & $\int_X \nabla u \cdot \nabla v - \int_{\pd X} \sigma u v$ & $H^1(X)  $\\
\end{tabular}
\end{equation}
where $H^1$ is the space of square integrable functions with square integrable distributional gradient.
However, from elliptic regularity theory, we know that Laplacian eigenfunctions are analytic inside any open domain.
Thus \eqref{eq:LaplaceEigen} is satisfied also in the classical sense.
Moreover if $\Sigma$ is a smooth (that is $C^\infty$) part of $\pd X$, $u$ is also smooth on $\Sigma$ (see for example \cite[Section 6.3]{Evans}).

Recall from spectral theory that the eigenvalues of $\O$ have finite multiplicity and can be arranged in a non-decreasing sequence which tends to infinity, and which we will denote as
$$\l_1 \leq \l_2 \leq \dots ,$$
where each eigenvalue is repeated as many times as its multiplicity.

For future reference we record the following uniqueness result.

\begin{thm}
\label{T:HolmgrenUniq}
Let $u$ be such that $\Delta u + \l u = 0$ in $\O$.
If $u= 0$ and $\dfrac{\pd u}{\pd \nu} = 0$ on $\Sigma$, an open and smooth subset of $\pd \O$, then $u$ is constant zero in the whole $\O$.
\end{thm}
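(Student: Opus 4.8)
The plan is to combine the interior real-analyticity of solutions of the Helmholtz equation with an extension-by-zero argument across $\Sigma$, thereby reducing the claim to the elementary fact that a real-analytic function on a connected open set which vanishes on a nonempty open subset vanishes identically. In other words, this is a (weak) incarnation of Holmgren's uniqueness theorem tailored to the elliptic operator $\Delta + \l$.

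First I would work locally near a point $x_0 \in \Sigma$. Since $\Sigma$ is open in $\pd\O$ and smooth, one can pick a ball $B = B_r(x_0)$ small enough that $B \cap \pd\O = B \cap \Sigma$ and $B \smin \ovrl\O$ is a nonempty open set (near a smooth boundary point $\O$ is locally a subgraph, so this is automatic). Define $\tilde u$ on $B$ by $\tilde u = u$ on $B \cap \ovrl\O$ and $\tilde u = 0$ on $B \smin \ovrl\O$. Because the trace of $u$ on $\Sigma$ vanishes, the two pieces glue to an element $\tilde u \in H^1(B)$.

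Next I would check that $\tilde u$ weakly solves $\Delta \tilde u + \l \tilde u = 0$ in $B$. For $\vphi \in C_c^\infty(B)$, using that $u$ is smooth up to $\Sigma$ (the regularity recalled after \eqref{eq:weakbdCond}), Green's formula on $B \cap \O$ gives $\int_{B \cap \O} \nabla u \cdot \nabla \vphi = -\int_{B \cap \O} (\Delta u)\,\vphi + \int_{B \cap \Sigma} \frac{\pd u}{\pd \nu}\,\vphi$, and both boundary contributions vanish: the part over $\pd B \cap \O$ because $\vphi$ is compactly supported, and the part over $B \cap \Sigma$ because $\frac{\pd u}{\pd\nu} = 0$ there by hypothesis. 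Hence $\int_B \nabla \tilde u \cdot \nabla \vphi = \int_{B \cap \O} \nabla u \cdot \nabla \vphi = \l \int_{B\cap\O} u\,\vphi = \l \int_B \tilde u\,\vphi$, which is the weak formulation on $B$. Since $\Delta + \l$ has real-analytic coefficients and is elliptic, it is analytic-hypoelliptic, so $\tilde u$ is real-analytic in $B$; vanishing on the nonempty open set $B \smin \ovrl\O$ and connectedness of $B$ then force $\tilde u \equiv 0$ in $B$, so $u \equiv 0$ on the nonempty open set $B \cap \O \sset \O$. Finally, $u$ is real-analytic in $\O$ by interior elliptic regularity and $\O$ is connected, so $u \equiv 0$ in $\O$.

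The only genuinely delicate point is the middle step: verifying that the zero extension $\tilde u$ is a bona fide weak solution across $\Sigma$. This is exactly where both Cauchy conditions $u = 0$ and $\frac{\pd u}{\pd\nu} = 0$ on $\Sigma$ enter, together with the smoothness of $u$ up to $\Sigma$ and the one-sided local geometry of $\pd\O$ near the smooth point $x_0$; once this is in place, the remainder is a routine appeal to analytic hypoellipticity and to the identity theorem for real-analytic functions on connected domains.
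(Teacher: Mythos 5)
Your proof is correct and follows essentially the same route as the paper: extend $u$ by zero across $\Sigma$, verify that the extension is a distributional solution of $\Delta \tilde u + \l \tilde u = 0$ in a small ball straddling the boundary, deduce that $u$ vanishes on an open subset of $\O$, and conclude by analytic continuation. The only (harmless) difference is that where the paper cites Holmgren's uniqueness theorem to force the extension to vanish in the ball, you invoke analytic hypoellipticity of $\Delta+\l$ together with the identity theorem for real-analytic functions, an equally standard and arguably more self-contained way to finish.
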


We briefly outline the classic argument to prove this fact from Holmgren's uniqueness theorem.
Let $B$ be an open ball such that $B \cap \pd \O \sset \Sigma$.
Extending $u$ to $0$ in $B \smin \O$, it is easy to check that $-\Delta u = \l u$ in the distributional sense in $B$.
By \cite[Theorem 5.3.1]{Hormander}, $u$ must be zero also in an open set inside $\O$.
But then $u = 0$ on the whole $\O$ by analytic continuation.

\subsection{Stability of eigenvalues of the Laplacian}

We review some results that show that the spectrum of the Laplacian is continuous under domain perturbations, and give some useful quantitative estimates on the eigenvalues' shifts.

First we recall a result of analyticity of eigenvalues and eigenfunctions with respect to a perturbation parameter, which is a consequence of the classic Rellich-Nagy Theorem \cite[Theorem 1 at p.~33]{Rellich} (see also \cite[Section 4.2]{BurenkovLambertiCristoforis} and references therein).

%We recall some results from \cite{LemenantMilakisSpinolo} which provide us with estimates on the stability of eigenvalues with respect to the Hausdorff distance between the underlying domains.

%We consider now the eigenvalue problems \eqref{eq:LaplaceEigen}, with boundary condition \eqref{eq:bdyCond}, on $D$ and $\O_\e$.
%Let $V$ be a dense subset of $H^1$ which takes into account the boundary conditions on the boundary.
%For example, for homogeneous Dirichlet boundary conditions $V = H_0^1$ (the closure in the $H^1$ norm of $C^\infty$ functions compactly supported in $D$), while for homogeneous Neumann boundary conditions $V=  \bra{u \in H^1: \int u = 0}$.

\begin{thm}
\label{T:RellichNagy}
Let $(\phi_t)_{t \in [0,t_0]}$ be a family of diffeomorphisms of $\RR^N$ such that $\phi_t$ is analytic in $t$, $\phi_0 $ is the identity, and $\phi_t (\O) \supseteq \O $ for every $t$.
Let $\l$ be an eigenvalue of $\O$ of multiplicity $m$.
Then there exist $\l_{t}^1 \leq \dots \leq \l_{t}^m$ and functions $u_{t}^1, \dots, u_{t}^m$ such that for $j = 1, \dots, m$,
\begin{itemize}
\item for any $t$, $\l_t^j$ is an eigenvalue of $\O_t$ with associated eigenfunction $u_t^j$;
\item for any $t$, $\int_{\O_t} u_t^j u_t^i $ is $1$ if $j=i$ and is $0$ otherwise;
\item $\l_t^j $ and $u_t^j$ are analytic in $t$;
\item $\l_0^j =\l$ and $u_0^j$ is an eigenfunction associated to $\l$.
\end{itemize}
Moreover for any $\delta > 0$ small enough, there is a $T$ such that for any $t < T$ the only eigenvalues of $\phi_t(\O)$ in $(\l - \delta, \l+\delta)$ are $\l_t^1, \dots ,\l_t^m$.
\end{thm}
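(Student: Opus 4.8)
The plan is to deduce Theorem~\ref{T:RellichNagy} from the Rellich--Nagy analytic perturbation theory for self-adjoint operators, after recasting the family of perturbed eigenvalue problems on the variable domains $\O_t = \phi_t(\O)$ as a family of problems on the \emph{fixed} domain $\O$. First I would use the pull-back by $\phi_t$: for $u \in V(\O_t)$ set $v = u \circ \phi_t \in V(\O)$ (the hypothesis $\phi_t(\O) \supseteq \O$ together with $\phi_t$ being a diffeomorphism guarantees $\phi_t$ maps $\O$ onto $\O_t$, so this is a bijection between the relevant function spaces, and it respects the Dirichlet/Neumann/Robin boundary conditions since it maps $\pd\O$ to $\pd\O_t$). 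Under this change of variables the Dirichlet form $Q_{\O_t}(u,u) = \int_{\O_t}|\nabla u|^2 - \int_{\pd\O_t}\sigma u^2$ becomes $\int_\O (A_t \nabla v \cdot \nabla v)\,|\det D\phi_t| - \int_{\pd\O}\sigma v^2 \,J_t$, where $A_t = (D\phi_t)^{-1}(D\phi_t)^{-T}$ and $J_t$ is the surface Jacobian, and the $L^2$ inner product $\int_{\O_t} u w$ becomes $\int_\O v w\, |\det D\phi_t|$. Since $\phi_t$ is analytic in $t$ and $\phi_0 = \mathrm{id}$, all these coefficients are analytic in $t$ with values in the appropriate spaces of bounded (and, for the ellipticity constant, uniformly positive-definite for small $t$) functions.

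Next I would package this as a family of operators acting on a fixed Hilbert space. The cleanest route is to absorb the weight: equip $L^2(\O)$ with the fixed inner product and consider the sesquilinear forms $a_t(v,w) = \int_\O A_t\nabla v\cdot\nabla w\,\rho_t - \int_{\pd\O}\sigma v w\, J_t$ and $b_t(v,w) = \int_\O v w\,\rho_t$, where $\rho_t = |\det D\phi_t|$; then $\l$ is an eigenvalue of $\O_t$ iff $a_t(v,\cdot) = \l\, b_t(v,\cdot)$ on $V(\O)$. One then converts this generalized eigenvalue problem to a standard one, e.g. by the substitution $w = \rho_t^{1/2} v$ (legitimate since $\rho_t^{1/2}$ is a bounded, boundedly invertible, analytic multiplication operator on $L^2(\O)$), which produces a family of self-adjoint operators $H_t$ on the \emph{fixed} $L^2(\O)$, with form domain independent of $t$, depending on $t$ in the sense of real-analytic families of type (B) in Kato's terminology. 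At $t=0$ we recover the Laplacian on $\O$ with the chosen boundary conditions, whose eigenvalue $\l$ has multiplicity $m$.

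Then I would invoke the Rellich--Nagy theorem (\cite[Theorem 1 at p.~33]{Rellich}; equivalently Kato's analytic perturbation theory for type (B) families): since $H_t$ has compact resolvent and depends analytically on $t$, the $m$ eigenvalues of $H_t$ near $\l$ and a corresponding $b_t$-orthonormal system of eigenvectors can be chosen to be analytic in $t$ on a neighbourhood of $0$, and for $\delta$ small enough there is a $T$ such that the spectrum of $H_t$ inside $(\l-\delta,\l+\delta)$ consists exactly of these $m$ branches for $t<T$ (this is the standard consequence of the analyticity of the Riesz spectral projection $P_t = \frac{1}{2\pi i}\oint (\zeta - H_t)^{-1}\,d\zeta$ around a circle enclosing only $\l$, whose rank is constant $=m$ for small $t$). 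Finally I would transport everything back: undoing the substitution $w = \rho_t^{1/2}v$ and then the pull-back $v = u\circ\phi_t$ turns the analytic eigenvectors $w_t^j$ into analytic-in-$t$ eigenfunctions $u_t^j$ of $\O_t$, and the $b_t$-orthonormality $b_t(v_t^j,v_t^i)=\delta_{ij}$ becomes exactly $\int_{\O_t} u_t^j u_t^i = \delta_{ij}$, while $u_0^j$ is an eigenfunction of $\O$ associated with $\l$. The main obstacle — really the only nontrivial point — is verifying that the transformed forms constitute a genuine real-analytic family of type (B) with \emph{fixed} form domain, i.e. checking that $t\mapsto A_t$, $\rho_t$, $J_t$ are analytic with values in $L^\infty$, that $A_t$ stays uniformly elliptic for small $t$ (so $a_t$ stays uniformly sectorial/coercive modulo a shift, using that the Robin boundary term is a form-small perturbation by the trace inequality), and that the Robin surface term varies analytically; once this bookkeeping is in place, the conclusion is a direct citation of Rellich--Nagy.
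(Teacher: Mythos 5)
Your argument is correct and is essentially the route the paper itself relies on: the paper gives no proof of this statement, deferring to Rellich's theorem and to \cite[Section 4.2]{BurenkovLambertiCristoforis}, and the standard justification behind those references is exactly your reduction --- pull back to the fixed domain via $\phi_t$, check that the transformed forms give a real-analytic family of type (B) with fixed form domain (the coefficients $A_t$, $\rho_t$, $J_t$ being analytic in $t$ with values in $L^\infty$ even for Lipschitz $\pd\O$), apply Rellich--Nagy together with the constancy of the rank of the Riesz projection, and transport back. The only cosmetic slip is your use of the hypothesis $\phi_t(\O)\supseteq\O$ to argue that $\phi_t$ maps $\O$ onto $\O_t$; this is true by the definition $\O_t=\phi_t(\O)$ and needs no hypothesis.
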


For our purposes we will also need a finer estimate on the variation of eigenvalues, as expressed in the following lemma.

\begin{lemma}
\label{T:QuantEstEigVar}
Let $\phi $ be a diffeomorphism of $\RR^N$.
Let $\l_n$ be the $n$-th eigenvalue of $\O$ and $\tilde \l_n$ the $n$-th eigenvalue of $\phi( \O)$.
Then there exists a constant $C$, which depends only on the Lipschitz constants of $\pd \O$ and of $\phi$, such that
$$|\tilde \l_n - \l_n| \leq C \max \{\tilde \l_n, \l_n \} (|\phi - id|_{C^{1}(\overline{\O})}). $$
\end{lemma}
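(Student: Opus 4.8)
The plan is to transplant the problem onto the fixed domain $\O$ via the pull-back by $\phi$, turning the comparison of the eigenvalues of two different domains into a comparison of two Rayleigh quotients on the \emph{same} domain but with different coefficients. Concretely, if $u$ is an eigenfunction on $\phi(\O)$, then $v \deq u \circ \phi$ lies in $V(\O)$ (the change of variables preserves the Dirichlet trace condition, and for Neumann/Robin it preserves $H^1$, with the Robin boundary term transforming by a bounded Jacobian factor), and one computes
\[
\int_{\phi(\O)} |\nabla u|^2 = \int_{\O} A \nabla v \cdot \nabla v, \qquad \int_{\phi(\O)} u^2 = \int_{\O} b\, v^2,
\]
where $A = A(x) = |\det D\phi|\, (D\phi)^{-1} (D\phi)^{-T}$ and $b = b(x) = |\det D\phi|$. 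First I would record that, since $|\phi - id|_{C^1(\ovrl\O)}$ is small (the estimate is vacuous otherwise, as $C$ may be taken large), both $A$ and $b$ are uniformly close to the identity matrix and to $1$ respectively, with $\|A - \id\|_\infty + \|b - 1\|_\infty \le c\, |\phi - id|_{C^1(\ovrl\O)}$ for a constant $c$ depending only on the Lipschitz constant of $\phi$; in particular $A$ is uniformly elliptic.

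Next I would invoke the min-max (Courant–Fischer) characterization
\[
\tilde\l_n = \min_{\substack{E \sset V(\O) \\ \dim E = n}} \ \max_{v \in E \smin \{0\}} \frac{\int_\O A \nabla v \cdot \nabla v - \int_{\pd\O}\sigma\, \hat b\, v^2}{\int_\O b\, v^2},
\]
with the analogous expression for $\l_n$ obtained by replacing $A \rightsquigarrow \id$, $b \rightsquigarrow 1$, $\hat b \rightsquigarrow 1$ (here $\hat b$ is the surface Jacobian factor in the Robin case, also $1+O(|\phi-id|_{C^1})$; in the Dirichlet/Neumann cases the boundary term is absent). The key quantitative step is then the following two-sided comparison: for every finite-dimensional $E$ and every $v \in E$,
\[
\Bigl| \int_\O A\nabla v \cdot \nabla v - \int_\O |\nabla v|^2 \Bigr| \le \|A - \id\|_\infty \int_\O |\nabla v|^2, \qquad \Bigl| \int_\O b\, v^2 - \int_\O v^2 \Bigr| \le \|b-1\|_\infty \int_\O v^2,
\]
and similarly for the boundary term using the trace inequality on the Lipschitz domain $\O$ (whose constant depends only on the Lipschitz character of $\pd\O$), which can be absorbed since on an eigenfunction $\int_{\pd\O} v^2$ is controlled by $\int_\O |\nabla v|^2 + \int_\O v^2$. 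Dividing through and using that on the optimal (or near-optimal) subspace the Rayleigh numerator is comparable to $\max\{\tilde\l_n, \l_n\}$ times the denominator, one gets that the two Rayleigh quotients differ multiplicatively by a factor $1 + O(|\phi - id|_{C^1(\ovrl\O)})$. Taking min-max on both sides yields $\tilde\l_n \le (1 + C'|\phi-id|_{C^1})\, \l_n$ and, by symmetry (running the argument with $\phi^{-1}$, whose $C^1$ norm of $\phi^{-1}-id$ is likewise $O(|\phi-id|_{C^1})$ when the perturbation is small), $\l_n \le (1 + C'|\phi-id|_{C^1})\,\tilde\l_n$; subtracting gives $|\tilde\l_n - \l_n| \le C \max\{\tilde\l_n,\l_n\}\, |\phi-id|_{C^1(\ovrl\O)}$.

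The main obstacle I anticipate is bookkeeping the dependence of all constants \emph{only} on the two stated quantities — the Lipschitz constants of $\pd\O$ and of $\phi$ — rather than on, say, $\|D\phi\|_\infty$ or finer norms of $\phi$. This is where one must be careful: the bounds on $\|A-\id\|_\infty$, $\|b-1\|_\infty$ and the invertibility of $D\phi$ should be phrased in terms of $|\phi-id|_{C^1}$ and the Lipschitz constant of $\phi$ (which bounds $\|D\phi\|_\infty$ and, together with a lower bound coming from $|\phi-id|_{C^1}$ being small, controls $\|(D\phi)^{-1}\|_\infty$), while the trace inequality and the Poincaré-type control of $\int_{\pd\O} v^2$ depend only on the Lipschitz character of $\O$. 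A secondary subtlety, in the Robin case, is checking that the (possibly sign-indefinite) boundary term does not spoil the comparison; this is handled by the absorption inequality above, valid for $|\phi-id|_{C^1}$ small, which is the only regime in which the statement has content.
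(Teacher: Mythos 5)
Your argument is essentially the paper's: the paper does not prove this lemma in-house but defers to the proof of Lemma 6.1 in \cite{BurenkovLamberti2}, which is precisely the pull-back/min--max comparison of Rayleigh quotients on the fixed domain that you carry out (there packaged as a ``transition operator'' between the two form domains), with the same bookkeeping of constants through $\|A-\id\|_\infty$, $\|b-1\|_\infty$ and the bi-Lipschitz bounds on $D\phi$. The one point worth flagging is the Robin case, where the form is not sign-definite and the purely multiplicative comparison needs the G\aa rding-type absorption of the boundary term that you sketch (or a spectral shift to reduce to a nonnegative operator); the paper glosses over this as well, so your proposal is consistent with its intended argument.
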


The proof of this estimate can be obtained by following the same argument in the proof of \cite[Lemma 6.1]{BurenkovLamberti2}, substituting appropriately the bilinear form and the function space with the ones defined in \eqref{eq:weakbdCond}, depending on the boundary conditions considered.

\section{Hadamard's formula and boundary properties of eigenfunctions}
\label{sect:Hadamard}

In this section we study some independence properties of Laplacian eigenfunctions and of their gradients at the boundary.
We first recall a Hadamard's formula for the variation of eigenvalues under a deformation of the boundary.
The dot superscript will indicate differentiation in $t$.

\begin{lemma}
\label{T:Hadamard}
Let $(\phi_t)_{t\in [0,t_0]}$  be a family of diffeomorphisms
such that $\phi_t $ is analytic in $t$ and $\phi_0$ is the identity.
%Let $(\O_t)_{t\in [0,t_0]}$ be a family of domains such that $\O_t = \phi_t (\O)$ for every $t$.
Suppose that the support of $\phi_t$ is contained in a fixed open set $U$ for every $t$, and that $\pd \O \cap U$ is smooth.
Let $\l_t, u_t$ be an eigenvalue-eigenfunction couple of $\phi_t(\O)$, and suppose both are differentiable in $t$.
Then
\begin{equation}
\label{eq:Hadamard}
\dot \l_0 = \int_{\pd \O} \Big(|\nabla u_0|^2 - \l_0 u_0^2 + (\pd_{\nu_0} u_0) (Hu_0 - 2 \pd_{\nu_0} u_0) \Big) \nu_0 \cdot \dot e_0,
\end{equation}
where $\nu_t$ indicates the outward unit normal vector, $e_t$ the identity on $\phi_t(\pd \O)$, and $H$ is the mean curvature of $\pd \O$.
\end{lemma}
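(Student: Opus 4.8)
The plan is to differentiate the weak eigenvalue equation in $t$ and then convert the resulting volume integrals into boundary integrals using the shape-derivative structure of the flow $\phi_t$. First I would set up notation: write $\O_t = \phi_t(\O)$, let $V = \dot\phi_0$ be the velocity field (supported in $U$), and normalize $u_t$ by $\int_{\O_t} u_t^2 = 1$. Differentiating the normalization and the eigenvalue relation $Q_{\O_t}(u_t,v) = \l_t \int_{\O_t} u_t v$ in $t$ at $t=0$ gives, after choosing $v=u_0$ and using that $u_0$ is an eigenfunction, the intermediate formula
\begin{equation}
\label{eq:Hadamard-intermediate}
\dot\l_0 = \int_{\pd\O}\big(|\nabla u_0|^2 - \l_0 u_0^2\big)\,(V\cdot\nu_0) + \text{(boundary terms from the shape derivative of } u_0\text{)}.
\end{equation}
The subtle point is that the material/shape derivative $\dot u_0$ does not vanish and does not satisfy the homogeneous boundary condition; one must track how $\dot u_0$ interacts with the boundary. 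Since $\pd\O\cap U$ is smooth and $u_0$ is smooth there by elliptic regularity, all the manipulations below are legitimate.

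The key computational step is the treatment of the second group of terms. On the Dirichlet part one uses that $u_t\circ\phi_t$ (or $u_t$ itself, via the transport identity) vanishes on $\pd\O$, so the tangential part of $\nabla u_0$ drops out and only $\pd_{\nu_0}u_0$ survives; differentiating the boundary condition along the flow produces the term $(\pd_{\nu_0}u_0)(Hu_0 - 2\pd_{\nu_0}u_0)(V\cdot\nu_0)$ after invoking the surface divergence theorem, where the mean curvature $H$ enters precisely through the derivative of the normal $\dot\nu_0$ and the change of the surface measure (the Jacobian of $\phi_t$ restricted to $\pd\O$ contributes $\mathrm{div}_{\pd\O}(V)$, whose normal component is $H(V\cdot\nu_0)$). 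For Neumann and Robin the same bookkeeping applies with $Hu_0$ replaced appropriately, but the \emph{stated} formula is written for the Dirichlet case with $c = \l_0$; I would remark that for Neumann one has instead the quantity $|\nabla u_0|^2 - \l_0 u_0^2$ with the curvature contribution absorbed differently, i.e.\ the general shape of the result is $(|\nabla u_0|^2 - c u_0^2 + \text{curvature correction})(V\cdot\nu_0)$ as advertised in \eqref{eq:u-lu}.

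The main obstacle is the rigorous justification of the boundary-integral manipulations — in particular commuting $\pd_t$ with the surface integral, expressing $\dot\nu_0$ and the derivative of the surface measure in terms of $V$ and $H$, and showing that all terms involving $\dot u_0$ recombine into the stated single expression rather than leaving an uncontrolled residual $\int_{\pd\O}\dot u_0(\cdots)$. This is exactly where smoothness of $\pd\O\cap U$ and the compact support of $\phi_t$ in $U$ are used: away from $U$ nothing moves, so there are no boundary contributions from the (merely Lipschitz) part of $\pd\O$. I would carry out the surface-divergence-theorem identity $\int_{\pd\O}\mathrm{div}_{\pd\O}(f V_{\mathrm{tan}}) = \int_{\pd\O} f\,H\,(V\cdot\nu_0)$ (no boundary of the boundary, since $\pd\O$ is closed) as the crucial lemma that makes $H$ appear, and then a direct matching of terms completes the proof. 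Since this is the classical Hadamard formula adapted to the Lipschitz-with-smooth-window setting, I would reference the standard derivations (e.g.\ as in the surveys cited) for the routine parts and only spell out the steps where the localization to $U$ matters.
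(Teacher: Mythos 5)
Your overall strategy (differentiate the eigenvalue identity in $t$, use the normalization and the boundary condition to eliminate the shape derivative of the eigenfunction, convert to boundary integrals) is the standard one and is close in spirit to the paper's. The paper's actual route is more economical: it differentiates the two identities $\l_t=-\int_{\O_t}u_t\Delta u_t=\int_{\O_t}|\nabla u_t|^2$ using a transport (Leibniz) formula for moving domains, then integrates by parts so that the only terms involving $\dot u_t$ are $\int_{\pd\O_t}\dot u_t\,\pd_{\nu_t}u_t$ (Neumann) or $\int_{\pd\O_t}u_t\,\pd_{\nu_t}\dot u_t$ (Dirichlet), which vanish at $t=0$ by the boundary condition. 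In particular, for Dirichlet and Neumann the paper never needs $\dot\nu_0$, the derivative of the surface measure, or the surface divergence theorem: the curvature term $(\pd_{\nu_0}u_0)(Hu_0-2\pd_{\nu_0}u_0)$ is only genuinely nontrivial for Robin conditions (for Dirichlet $u_0=0$ kills $Hu_0$, for Neumann $\pd_{\nu_0}u_0=0$ kills the whole bracket), and the paper explicitly defers that case to the literature. Note also that the stated formula is a single unified expression valid for all three boundary conditions, not ``written for the Dirichlet case with $c=\l_0$''; the constants $c$ of \eqref{eq:cBdyCond} are only the specialization to flat boundary pieces where $H=0$.

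Two concrete problems with your plan. First, the step you yourself identify as the main obstacle --- showing that the terms involving $\dot u_0$ ``recombine into the stated single expression rather than leaving an uncontrolled residual'' --- is exactly the heart of the proof, and ``a direct matching of terms completes the proof'' does not supply it; you need the specific integration-by-parts arrangement above (or an equivalent use of the eigenvalue equation tested against $u_0$) to see that the residual is a multiple of $\int_{\pd\O}\dot u_0\,\pd_{\nu_0}u_0$ or $\int_{\pd\O}u_0\,\pd_{\nu_0}\dot u_0$ and hence vanishes. Second, your ``crucial lemma'' $\int_{\pd\O}\mathrm{div}_{\pd\O}(fV_{\mathrm{tan}})=\int_{\pd\O}fH\,(V\cdot\nu_0)$ is wrong as stated: on a closed surface the integral of the tangential divergence of a \emph{tangential} field is zero; the mean curvature appears from the normal component, via $\int_{\pd\O}\mathrm{div}_{\pd\O}(fV)=\int_{\pd\O}fH\,(V\cdot\nu_0)$ when $fV=f(V\cdot\nu_0)\nu_0$ up to a tangential field. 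As written, your mechanism for producing $H$ would yield $0$. The localization remarks (support of $V$ in $U$, smoothness of $\pd\O\cap U$) are correct and match the paper's use of these hypotheses.
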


Hereafter we briefly prove this fact in the case of homogeneous Dirichlet or Neumann boundary conditions.
The case of Robin conditions requires a finer analysis of the dependence on $t$ of the surfaces $\phi_t(\pd \O)$, for which we refer to \cite[Identities (69) and (57)]{BandleWagner}.

\begin{proof}
Let $(\O_t)_{t\in [0,t_0]}$ be a family of domains such that $\O_t = \phi_t (\O)$ for every $t$.
By the divergence theorem, the distributional gradient of the measure $\1_{\O_t} \d \LL^N$, where $\1_{\O_t}$ is the characteristic function of $\O_t$ and $\LL^N$ is the $N$-dimensional Lebesgue measure, is given by $\nu_t \Sigma^{N-1}_t $, where $\Sigma^{N-1}_t$ is the surface measure on $\pd \O_t$.
Therefore by the chain rule
$$\dfrac{d}{dt} ({\1}_{\O_t} \, \LL^N)   =  \nu_t \cdot \dot e_t \, \Sigma^{N-1}_t,$$
so we have the following Leibniz' formula:
\begin{equation}
\label{eq:LeibnizSurfaces}
\dfrac{d}{dt} \left( \int_{\O_t} f_t  \right) = \int_{\O_t} \dot f_t  + \int_{\pd \O_t} f_t \nu_t \cdot \dot e_t.
\end{equation}
Consider now the identity
\begin{equation}
\label{proof:eq:Hadamard:id}
\l_t = - \int_{\O_t} u_t \Delta u_t = \int_{\O_t}|\nabla u_t|^2 .
\end{equation}
Differentiating in $t$ the first equality in \eqref{proof:eq:Hadamard:id} and using \eqref{eq:LeibnizSurfaces} we obtain
\begin{equation}
\label{proof:eq:Hadamard:firstdid}
2 \l_t \int_{\O_t} \dot u_t u_t = -\l_t \int_{\pd \O_t} u_t^2 \nu_t \cdot \dot e_t.
\end{equation}
In the case of Neumann boundary conditions,
differentiating in $t$ the last term in \eqref{proof:eq:Hadamard:id}, using \eqref{eq:LeibnizSurfaces}, integrating by parts, and substituting \eqref{proof:eq:Hadamard:firstdid}, we have that
\begin{equation*}
\dot \l_t = \int_{\pd \O_t} (|\nabla u_t|^2 - \l_t u_t^2) \nu_t \cdot \dot e_t + 2 \int_{\pd \O_t} \dot u_t \dfrac{\pd u_t}{\pd \nu_t},
\end{equation*}
which gives \eqref{eq:Hadamard} since $\pd_{\nu_0} u_0 = 0$ on $\pd \O_0$.
Proceeding in the same way for Dirichlet boundary conditions, only exchanging the roles of the functions in the integration by parts step, we obtain
\begin{equation*}
\dot \l_t = \int_{\pd \O_t} (|\nabla u_t|^2 - \l_t u_t^2) \nu_t \cdot \dot e_t + 2 \int_{\pd \O_t} u_t \dfrac{\pd {\dot u_t}}{\pd \nu_t} + 2 \dot \l_t,
\end{equation*}
which gives \eqref{eq:Hadamard} since $u_0 = 0$ on $\pd \O_0$.
\end{proof}
We notice that considering
\begin{equation}
\label{eq:cBdyCond}
c =
\begin{cases}
0 & \text{ if } u|_{\pd \O} = 0 ,\\[2pt]
\l_0 & \text{ if } \pd_\nu u |_{\pd \O}= 0 ,\\[2pt]
\l_0 + 2 \sigma^2 & \text{ if } \sigma u |_{\pd \O} = \pd_\nu u |_{\pd \O},\\  
\end{cases}
\end{equation}
if $\dot e_0$ is supported on a flat part of $\pd \O$, the integrand in \eqref{eq:Hadamard} can be rewritten as $|\nabla u|^2 - c u^2$.
In the following lemma we study such a quantity, in particular the behavior of its zeros.

\begin{lemma}
\label{T:Gradu-lunotconst}
Let $c$ be a constant and let $u, \tilde u$ be two orthonormal eigenfunctions associated to the same eigenvalue.
Let $\Sigma $ be an arbitrary smooth open subset of $\pd \O$.
Then:
\begin{enumerate}
\item $|\nabla u|^2 - c u^2$ cannot be constant zero on $\Sigma$;
\item \label{T:Gradu-lunotconst:2}
 $|\nabla u|^2 - c u^2 - (|\nabla \tilde u|^2 - c \tilde u^2)$ cannot be constant zero on $\Sigma$.
\end{enumerate}
\end{lemma}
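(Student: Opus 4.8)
The plan is to argue by contradiction using the unique continuation result (Theorem~\ref{T:HolmgrenUniq}) together with the analyticity of eigenfunctions on the smooth part $\Sigma$ of the boundary. Both parts follow the same scheme, so I would set up a unified argument and then specialize.

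\emph{Part (1).} Suppose $|\nabla u|^2 - c u^2 \equiv 0$ on $\Sigma$. In the Dirichlet case $c = 0$, so $|\nabla u|^2 = 0$ on $\Sigma$; since also $u = 0$ on $\Sigma$, we have $u = 0$ and $\pd_\nu u = 0$ on $\Sigma$, and Theorem~\ref{T:HolmgrenUniq} forces $u \equiv 0$ in $\O$, a contradiction. For the Neumann and Robin cases, on $\Sigma$ the tangential derivative of $u$ is determined by the boundary trace of $u$, and $\pd_\nu u$ is a (possibly zero) constant multiple of that trace; writing $|\nabla u|^2 = |\nabla_{\mathrm{tan}} u|^2 + (\pd_\nu u)^2$, the hypothesis becomes a relation of the form $|\nabla_{\mathrm{tan}} u|^2 = (c - \sigma^2)\, u^2$ on $\Sigma$ (with the appropriate constant for each boundary condition). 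I would then argue that at a point $x_0 \in \Sigma$ where $u(x_0) = 0$ — such a point exists, since otherwise $u$ has a sign on $\Sigma$ and we may instead work at an interior maximum of $|u|$ on $\Sigma$ — this relation forces $\nabla_{\mathrm{tan}} u(x_0) = 0$ as well, and combined with the boundary condition $\pd_\nu u(x_0) = \sigma u(x_0) = 0$ we get $u(x_0) = 0$ and $\nabla u(x_0) = 0$; from there, analyticity of $u$ along $\Sigma$ and a unique-continuation / Aronszajn-type argument (or a direct use of Theorem~\ref{T:HolmgrenUniq} after localizing) yields $u \equiv 0$. The cleanest route is probably: if $u$ does not vanish anywhere on $\Sigma$ then $\Sigma$ contains no nodal set of $u$, but a nonconstant eigenfunction must have $u$ change sign or the relation $|\nabla u|^2 = c u^2$ together with Neumann/Robin conditions forces $u$ constant, hence (since $\l \neq 0$ or $\sigma \neq 0$) forces $u \equiv 0$.

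\emph{Part (2).} This reduces to Part (1) by exploiting orthogonality. Set $w = u + \tilde u$ and $z = u - \tilde u$; if $|\nabla u|^2 - cu^2 = |\nabla \tilde u|^2 - c\tilde u^2$ on $\Sigma$, then a short computation gives $\nabla w \cdot \nabla z - c\, w z = 0$ on $\Sigma$, i.e.\ $|\nabla(u+\tilde u)|^2 - c(u+\tilde u)^2 = |\nabla(u-\tilde u)|^2 - c(u-\tilde u)^2$ in polarized form. Now $u + \tilde u$ and $u - \tilde u$ are \emph{orthogonal} eigenfunctions for the same eigenvalue (by orthonormality of $u, \tilde u$, their $L^2$ inner product is $\|u\|^2 - \|\tilde u\|^2 = 0$) and both are nonzero. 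After rescaling I obtain two orthonormal eigenfunctions $v_1, v_2$ with $|\nabla v_1|^2 - c v_1^2 = |\nabla v_2|^2 - c v_2^2$ on $\Sigma$ (up to a positive constant factor that cancels). Replacing $v_1$ by a further rotation in the two-dimensional eigenspace $\operatorname{lsp}\{v_1,v_2\}$, I may assume $v_1$ vanishes at a prescribed point $x_0 \in \Sigma$ (one degree of freedom suffices to kill a scalar); then at $x_0$ the equality forces $|\nabla v_1(x_0)|^2 = |\nabla v_2(x_0)|^2$, and I would combine this with the boundary condition to land in the situation of Part (1) applied to a suitable linear combination that has a zero of its full gradient on $\Sigma$ — appealing again to Theorem~\ref{T:HolmgrenUniq} for the contradiction. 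Alternatively, I would cover $\Sigma$ by the zero set argument: since $v_1$ is analytic and nonconstant on $\Sigma$, its zero set in $\Sigma$ has empty interior but is nonempty (again, a global eigenfunction cannot be one-signed on a boundary piece while satisfying the boundary condition unless it vanishes), and at any such zero we run Part (1).

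\emph{Main obstacle.} The delicate point is the step that produces, from the pointwise identity on $\Sigma$, a point $x_0 \in \Sigma$ at which \emph{both} $u(x_0) = 0$ \emph{and} $\nabla u(x_0) = 0$ (equivalently $\pd_\nu u(x_0) = 0$), so that Theorem~\ref{T:HolmgrenUniq} applies. In the Dirichlet case this is immediate. In the Neumann/Robin cases one must rule out the possibility that $|\nabla_{\mathrm{tan}} u|^2 = (c - \sigma^2) u^2$ holds on $\Sigma$ with $u$ never vanishing; the natural way is to show that such an ODE-type identity along $\Sigma$, together with analyticity and the fact that $\Sigma$ is only a piece of the boundary (so $u$ need not be an eigenfunction of any closed manifold), still forces $u$ to extend by zero or to be constant. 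I expect to handle this by choosing coordinates flattening $\Sigma$, treating the identity as a first-order relation constraining $u|_\Sigma$, and using that a nonzero analytic solution would have to be, e.g., of the form $a\cos(\mu \cdot x')$ locally, which then fails to match a genuine interior eigenfunction via the Cauchy–Kovalevskaya/Holmgren uniqueness unless it is zero. Making this last reduction fully rigorous — rather than merely plausible — is where the real work lies; everything else is bookkeeping and the polarization identity.
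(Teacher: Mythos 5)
Your proposal has genuine gaps at exactly the step you flag as ``where the real work lies,'' and the missing ingredient is the paper's central device. In Part~1 (Neumann/Robin), the identity $|\nabla_{\!S}\, u|^2=(c-\sigma^2)u^2$ on $\Sigma$ is perfectly consistent with $u$ never vanishing there: it says $\log|u|$ has tangential gradient of constant modulus, so locally $u$ behaves like an exponential along $\Sigma$, which has neither zeros nor interior critical points. Your two candidate points therefore need not exist --- $u$ may have no zero on $\Sigma$, and $|u|$ restricted to the \emph{open} set $\Sigma$ need not attain an interior maximum (it can be monotone toward $\pd\Sigma$). Moreover, even granting a point $x_0$ with $u(x_0)=\nabla u(x_0)=0$, neither \Cref{T:HolmgrenUniq} nor an Aronszajn-type theorem applies: those require vanishing Cauchy data on an \emph{open} subset of the boundary (or infinite-order vanishing at an interior point), not first-order vanishing at a single boundary point; boundary points where $u$ and $\nabla u$ both vanish occur for perfectly nontrivial eigenfunctions. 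The paper closes this exact gap with a quantitative blow-up argument: at a point where $u\neq 0$, flow along $\dot\g_t=C\nabla u(\g_t)$ (tangent to $\Sigma$ by the boundary condition), so that $f_t=u(\g_t)$ satisfies the Riccati equation $\dot f=Ccf^2$, which blows up at time $1/(Ccf_0)$; taking $C$ large makes the blow-up occur before the curve can leave $\Sigma$, contradicting smoothness of $u$ on $\Sigma$. Your guess that the constrained trace would look like $a\cos(\mu\cdot x')$ points the wrong way --- the obstruction is exponential growth, not oscillation.

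Part~2 has two further problems. The polarization $|\nabla u|^2-|\nabla\tilde u|^2-c(u^2-\tilde u^2)=\nabla w\cdot\nabla z-cwz$ with $w=u+\tilde u$, $z=u-\tilde u$ produces a \emph{bilinear} identity in two distinct eigenfunctions, not a statement of the form $|\nabla v_1|^2-cv_1^2=|\nabla v_2|^2-cv_2^2$; if you polarize back you simply recover the original hypothesis, so nothing is reduced to Part~1. Worse, the hypothesis is not invariant under rotations in the eigenspace: for $v_1'=\cos\theta\,v_1+\sin\theta\,v_2$, $v_2'=-\sin\theta\,v_1+\cos\theta\,v_2$ the difference of the two quadratic expressions picks up the term $2\sin(2\theta)\big(\nabla v_1\cdot\nabla v_2-cv_1v_2\big)$, so ``rotating to make $v_1$ vanish at $x_0$'' destroys the assumed identity. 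The paper instead picks $x_0$ with $u(x_0)\neq\tilde u(x_0)$ (available via \Cref{T:HolmgrenUniq}) and runs two gradient-flow curves, one for $u$ and one for $-\tilde u$, obtaining $\dot f_t+\dot{\tilde f}_t=Cc(f_t^2+\tilde f_t^2)$, whence at least one of the two satisfies the Riccati inequality and blows up --- the same mechanism as Part~1, applied to the pair.
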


\begin{proof}
The thesis for the case $c = 0$ is given by \Cref{T:HolmgrenUniq}.
Consider $c \neq 0$.
Our approach is inspired to the treatment of \cite[Chapter 6]{Henry}.

We first prove Point 1.
Suppose by contradiction that $ |\nabla u|^2 = c u^2$ on $\Sigma $.
We consider separately the different possible boundary conditions in \eqref{eq:bdyCond}.
\begin{enumerate}[i)]
\item \label{proof:Gradu-lunotconst:caseB=0} 
If the Dirichlet condition holds then $\pd{u}/\pd{\nu} = u = 0$ on $\Sigma$.
By \Cref{T:HolmgrenUniq} then $u=0$ on $\O$, a contradiction.
\item \label{proof:Gradu-lunotconst:caseA=0}
Suppose the Neumann condition holds.
The eigenfunction $u$ cannot be constant $0$ on $\Sigma$, otherwise we would be again in the situation of Case \ref{proof:Gradu-lunotconst:caseB=0}, so there is $x_0 \in \Sigma $ s.t.  $u(x_0) \neq 0$.
Let $\g_t$ be a solution in $\Sigma$ of the ODE
$$\begin{cases}
\g_0 = x_0, \\
\dot \g_t = C \nabla u(\g_t),
\end{cases}$$
 with $C $ a constant to be determined.
Then
\begin{equation}
\label{proof:Gradu-lunotconst:blowingcurve}
\dfrac{du(\g_t) }{dt} = C|\nabla u(\g_t)|^2 = C c u(\g_t)^2,
\end{equation}
if $\g_t \in \Sigma$.
Therefore by choosing $C$ large enough, there will be a time $T$ at which $\g_T \in \Sigma$ and $|u(\g_t) |\xrarr[t \to T]{}  \infty$, which is a contradiction.
\item \label{proof:Gradu-lunotconst:caseAandBnot0}
If the Robin condition holds, then 
$$c u^2 = |\nabla u|^2 = \sigma^2 u^2 + |\nabla_{\! \! S} \, u|^2 \quad \text{ on } \Sigma,$$
where $\nabla_{\! \! S} \, u$ is the surface gradient of $u$ on $\pd \O$.
If $c \neq \sigma^2$, we can build, as in Case \ref{proof:Gradu-lunotconst:caseA=0}, a curve $\g$ on which the eigenfunction $u$ blows up in short time, leading to a contradiction.
If $c = \sigma^2$ then $|\nabla_{\! \! S} \, u| = 0$ on $\Sigma$, and this leads to the following chain of implications: $u$ is constant on $\Sigma$, $\pd u / \pd \nu$ is constant on $\Sigma$, $u$ is  constant in $\O$ by \Cref{T:HolmgrenUniq}, $\pd u / \pd \nu$ is zero on $\pd \O$, $u$ is zero on $\O$ by \Cref{T:HolmgrenUniq}, a contradiction.
\end{enumerate}

We now prove Point 2.
Suppose by contradiction that $|\nabla u|^2 - |\nabla \tilde u|^2 = c ( u^2 - \tilde u^2 )$ on $\Sigma$.
Let $x_0 \in \Sigma$ be a point where $u(x_0)$ and $\tilde u(x_0)$ are different (existence of such a point is guaranteed by the smoothness of eigenfunctions on $\Sigma$ and \Cref{T:HolmgrenUniq}).
Let $f_t = u(\g_{t}), \tilde{f}_t = -\tilde u(\tilde \g_{t})$, where $\g$ and $\tilde \g$ solve
$$\begin{cases}
\dot \g_t = C \nabla u(\g_t), \\
\dot{\tilde{\g}}_t = -C\nabla \tilde u (\g_t),\\
\g_0 = \tilde \g_0 =x_0,
\end{cases}$$ and $C$ is a constant to be determined.
Then 
$$\dot f_t+\dot{\tilde{f}}_t =  C c ( f_t^2 + \tilde{f}_t^2) .$$
Therefore $\dot f_t \geq C c f_t^2$ or $\dot{\tilde{f}}_t \geq C c \tilde{f}_t^2$ for $t$ in a small neighborhood of $0$.
In conclusion, a choice of $C$ large enough would lead to blow up in short time of $u$ or $\tilde u$, which is impossible.
\end{proof}

\section{Splitting of the spectrum}
\label{sect:proofthm}

With the tools developed so far we can construct a localized boundary deformation which splits the eigenvalues perturbed from one eigenvalue as follows.

\begin{proposition}
\label{T:OneSplit}
Let $x \in \pd \O$, $B$ a ball centered at $x$, and $\Sigma = B \cap \pd \O$.
Suppose $\Sigma$ is flat, that is $\Sigma $ is contained in a hyperplane. 
Then, under the same hypotheses and notation of \Cref{T:RellichNagy}, we can construct a family of diffeomorphisms $(\phi_t)_{t \in (0,t_0)}$ such that $\phi_t$ is the identity outside $B$, $|\phi_t - id|_{C^1}$ is arbitrarily small, and $\l_t^i \neq \l_t^j$ for any $i,j \in \bra{1, \dots, m}$ and for all $t \in (0,t_0)$.
\end{proposition}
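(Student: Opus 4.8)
\emph{Sketch.} The idea is to push the flat piece $\Sigma$ outward by a single smooth ``bump'' with profile $\psi\ge 0$, tune $\psi$ so that the Hadamard derivatives of the $m$ branches issuing from $\l$ are pairwise distinct, and then exploit analyticity to pass from ``distinct near $t=0$'' to ``distinct throughout $(0,t_0)$'' after shrinking $t_0$. Concretely, choose coordinates so that $\Sigma\sset\{x_N=0\}$, the outward normal along $\Sigma$ is $e_N$, and $\O$ coincides with $\{x_N<0\}$ near $\Sigma$; fix a cut-off $\chi\in C^\infty_c(\RR)$ with $\chi\ge 0$ and $\chi(0)=1$; and for $\psi\in C^\infty_c(\Sigma)$, $\psi\ge 0$, set $\phi_t(x',x_N)=(x',\,x_N+t\,\psi(x')\chi(x_N))$, with $\chi$ and the support of $\psi$ small enough that the bump stays inside $B$. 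This family is analytic (indeed affine) in $t$, equals the identity off $B$, is a diffeomorphism of $\RR^N$ for $t$ small, satisfies $|\phi_t-\mathrm{id}|_{C^1}\le t\,\|\psi\chi\|_{C^1}$, and has $\phi_t(\O)\supseteq\O$ since $\psi\ge 0$ displaces $\Sigma$ outward. Hence \Cref{T:RellichNagy} applies and yields analytic branches $\l_t^1\le\dots\le\l_t^m$ with $\l_0^j=\l$ and $L^2$-orthonormal eigenfunctions $u_t^j$; applying \Cref{T:Hadamard} to each branch (the branches are differentiable, $\phi_t$ is supported in $B$, and $\pd\O\cap B=\Sigma$ is smooth), and recalling that along $\Sigma$ the integrand in \eqref{eq:Hadamard} equals $|\nabla u_0^j|^2-c\,(u_0^j)^2$ with $c$ as in \eqref{eq:cBdyCond} while $\nu_0\cdot\dot e_0=\psi$ on $\Sigma$, we get
\[
\dot\l_0^j=\int_\Sigma\big(|\nabla u_0^j|^2-c\,(u_0^j)^2\big)\,\psi .
\]

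Write $E_\l$ for the eigenspace of $\l$ and $q_\psi(v,w):=\int_\Sigma(\nabla v\cdot\nabla w-c\,vw)\,\psi$ for the corresponding symmetric bilinear form on $E_\l$. A computation analogous to the proof of \Cref{T:Hadamard}, carried out with the test function $u_0^i$ in place of $\dot u_0^j$ (legitimate since the branches are analytic), gives $q_\psi(u_0^i,u_0^j)=0$ for $i\neq j$; thus the orthonormal family $\{u_0^j\}$ diagonalizes $q_\psi|_{E_\l}$, and $\{\dot\l_0^1,\dots,\dot\l_0^m\}$ is exactly the spectrum of $q_\psi|_{E_\l}$. The proposition is therefore reduced to the following, which I expect to be the main obstacle: \emph{there is $\psi\in C^\infty_c(\Sigma)$, $\psi\ge 0$, for which $q_\psi|_{E_\l}$ has $m$ distinct eigenvalues.}

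I would prove this by a maximality argument based on \Cref{T:Gradu-lunotconst}. Since $\psi\mapsto q_\psi|_{E_\l}$ is linear and the number of distinct eigenvalues of a symmetric matrix is lower semicontinuous and $\le m$, among all nonnegative $\psi\in C^\infty_c(\Sigma)$ some $\psi_0$ realizes the maximal number of distinct eigenvalues of $q_\psi|_{E_\l}$. When $m\ge 2$ this maximum is $\ge 2$: otherwise $q_\psi|_{E_\l}$ would be a multiple of the identity for \emph{every} nonnegative $\psi\in C^\infty_c(\Sigma)$, so for orthonormal $u,\tilde u\in E_\l$ we would have $\int_\Sigma\big((|\nabla u|^2-cu^2)-(|\nabla\tilde u|^2-c\tilde u^2)\big)\psi=0$ for all such $\psi$, hence $(|\nabla u|^2-cu^2)-(|\nabla\tilde u|^2-c\tilde u^2)\equiv 0$ on $\Sigma$, contradicting \Cref{T:Gradu-lunotconst}. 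In particular $\psi_0\not\equiv 0$, so the interior $U_0$ of $\{\psi_0>0\}$ is a nonempty smooth open subset of $\pd\O$. Suppose now the maximum were $<m$; then some eigenvalue $\mu$ of $q_{\psi_0}|_{E_\l}$ has eigenspace $W$ with $\dim W\ge 2$. For $\eta\in C^\infty_c(U_0)$ and $|s|$ small we have $\psi_0+s\eta\ge 0$, and since $q_{\psi_0+s\eta}=q_{\psi_0}+s\,q_\eta$, standard first-order perturbation theory shows that the eigenvalues of $q_{\psi_0+s\eta}|_{E_\l}$ near $\mu$ are $\mu+s\rho+o(s)$ with $\rho$ ranging over the eigenvalues of the compression $P_W q_\eta P_W$ of $q_\eta$ to $W$, while the clusters about the other eigenvalues of $q_{\psi_0}|_{E_\l}$ stay mutually separated for $|s|$ small. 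Hence, if $P_W q_\eta P_W$ had two distinct eigenvalues for some such $\eta$, then $q_{\psi_0+s\eta}|_{E_\l}$ with $0<|s|$ small would have strictly more distinct eigenvalues than $q_{\psi_0}|_{E_\l}$, a contradiction; so $P_W q_\eta P_W$ is a scalar for every $\eta\in C^\infty_c(U_0)$. Taking $u,\tilde u\in W$ orthonormal and equating $q_\eta(u,u)=q_\eta(\tilde u,\tilde u)$ yields $\int_{U_0}\big((|\nabla u|^2-cu^2)-(|\nabla\tilde u|^2-c\tilde u^2)\big)\eta=0$ for all $\eta\in C^\infty_c(U_0)$, hence $|\nabla u|^2-cu^2=|\nabla\tilde u|^2-c\tilde u^2$ on $U_0$, once more contradicting \Cref{T:Gradu-lunotconst}. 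Therefore the maximum equals $m$.

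Finally, take $\psi=\e\psi_0$ with $\e>0$ small: this rescales the $\dot\l_0^j$ by $\e$, so they stay pairwise distinct, while $\sup_{t\in[0,t_0]}|\phi_t-\mathrm{id}|_{C^1}$ becomes as small as wanted. For $i\neq j$, the function $t\mapsto\l_t^i-\l_t^j$ is analytic on $[0,t_0]$, vanishes at $t=0$, and has nonzero derivative there, so it is not identically zero and has isolated zeros; after shrinking $t_0$ we may assume none of the finitely many differences $\l_t^i-\l_t^j$ vanishes on $(0,t_0)$, i.e.\ $\l_t^i\neq\l_t^j$ for all $i\neq j$ and $t\in(0,t_0)$, as required.
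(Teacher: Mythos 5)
Your proposal is correct, and it reaches a stronger conclusion than the paper's own argument by a genuinely different deployment of the same ingredients. The paper's proof fixes a pair $i\neq j$, uses Point \ref{T:Gradu-lunotconst:2} of \Cref{T:Gradu-lunotconst} to find a single point $y\in\Sigma$ where the two integrands $|\nabla u_0^i|^2-c(u_0^i)^2$ and $|\nabla u_0^j|^2-c(u_0^j)^2$ differ, and places one bump near $y$ so that the two Hadamard derivatives differ; this is short, but it treats one pair at a time, and it quietly assumes that the limiting eigenfunctions $u_0^i,u_0^j$ can be fixed before the perturbation is chosen, even though in \Cref{T:RellichNagy} the basis $u_0^1,\dots,u_0^m$ is itself selected by the family $\phi_t$. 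You instead identify $\{\dot\l_0^1,\dots,\dot\l_0^m\}$ with the spectrum of the form $q_\psi$ compressed to the eigenspace (the off-diagonal vanishing $q_\psi(u_0^i,u_0^j)=0$ you invoke is exactly the standard degenerate perturbation-theory fact needed to make this identification, and it also resolves the circularity above, since the spectrum of $q_\psi|_{E_\l}$ is basis-independent), and then run a maximality argument: if some eigenvalue of $q_{\psi_0}|_{E_\l}$ had a two-dimensional eigenspace $W$, scalarity of $P_Wq_\eta P_W$ for all $\eta$ supported in $\{\psi_0>0\}$ would force $|\nabla u|^2-cu^2\equiv|\nabla\tilde u|^2-c\tilde u^2$ there for orthonormal $u,\tilde u\in W$, contradicting \Cref{T:Gradu-lunotconst}. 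This buys you the proposition exactly as stated (all $m$ branches pairwise distinct for a single perturbation), whereas the paper's argument directly yields only that some pair separates --- which is in fact all that is used downstream, since \Cref{T:SingleSplit} only needs the multiplicity of $\l_r$ to drop strictly and the full splitting is recovered by iteration. The price of your route is the extra (standard, but not proved in the paper) perturbation-theory input; given that, your argument is complete and I see no gap.
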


\begin{proof}
Let $c$ be as in \eqref{eq:cBdyCond}.
By Point \ref{T:Gradu-lunotconst:2} of \Cref{T:Gradu-lunotconst}, there exists $y $ on $\Sigma$ such that 
\begin{equation}
\label{proof:eq:Single:Ineq}
(|\nabla u^i_0|^2 - c (u^i_0)^2)(y) \neq (|\nabla u^j_0|^2 - c (u^j_0)^2)(y).
\end{equation}
Then, by choosing a deformation of the boundary $\phi_t$ which is the identity outside an appropriately small neighborhood of $y$, we have
\begin{equation}
\label{proof:eq:Single:Had}
\int_{\pd \O} (|\nabla u^i_0|^2 - c (u^i_0)^2) \nu \cdot \dot \phi_0 \neq  \int_{\pd \O} (|\nabla u^j_0|^2 - c (u^j_0)^2) \nu \cdot \dot \phi_0.
\end{equation}
Such a perturbation can be constructed in many ways; for the sake of completeness, we give an explicit example hereafter.
 
By eventually reducing to a smaller $B$ and applying an invertible affine transformation, we can assume that $y = 0$ and $\Sigma = \bra{z \in B_1 : z_N = 0}$, where $B_1$ is the unit ball. 
Let $\hat z $ indicate $ (z_1, \dots, z_{N-1})$ and let
$$\rho_c(\hat z) = \begin{cases}
c^2 \exp \left( \dfrac{1}{|\hat z/c|^2-1} \right) & \text{ if } |\hat z| < c,\\[4pt]
0 & \text{ otherwise}.
\end{cases}$$
Notice that by construction $|\rho_c|_{C^1} \leq c$ for any $c \leq 1$. 
Let $\phi_t(z)$ be the extension of the map $ z \mapsto (\hat z , t \rho_c(\hat z))$ from  $\Sigma$ to a smooth function which is the identity outside $B$ and such that $|\phi_t - id|_{C^1} \leq |\rho_c|_{C^1}$.
By construction, $\nu \cdot \dot \phi_0 = \rho_c(\hat z)$ on $\Sigma$.
Then by choosing $c$ small enough, by the smoothness of $u$ on $\Sigma$ and by \eqref{proof:eq:Single:Ineq}, we have that \eqref{proof:eq:Single:Had} holds. Moreover we remark that it holds
\begin{equation}
\label{proof:eq:boundc}
|\phi_t - id|_{C^1} \leq c.
\end{equation}

In conclusion, by \Cref{T:Hadamard}, \eqref{proof:eq:Single:Had} implies that $\dot \l^i_0 \neq \dot \l^j_0 $.
Since $\l^i_t$ and $\l^j_t$ are both analytic in $t$, there exists a small $t_0$ such that $\l_t^i \neq \l_t^j$ for $t \in (0,t_0)$.
\end{proof}

\begin{rmk}
\label{T:LocPertFlat}
The flatness assumption of $\Sigma$, although making the argument simpler, is not really necessary in the proof of \Cref{T:OneSplit}, as one might build a boundary deformation such that \eqref{proof:eq:Single:Had} holds even if $\Sigma$ is not flat;
the idea would be the same, only some care would be required to manage the mean curvature term which is present in  \eqref{eq:Hadamard}.
On the other hand, if our aim is to find a local perturbation as in \Cref{T:EllipticLocalSpectrumGen}, the flatness assumption is not restrictive.
In fact, if $\Sigma $ is not contained in a hyperplane, by eventually considering a smaller $B$ and changing basis, we can assume that $\Sigma$ is the graph of a Lipschitz function $\phi$ such that $\phi(0) = x = 0$. 
Let  $B_r, B_R$ be two balls centered in $0$ such that $B_r \subset B_R \subset B $, and let $\eta$ be a smooth function which is $0$ in $B_r$ and $1$ outside $B_R$. 
Then the graph of $\phi \eta$ will be flat in $B_r$.
Notice also that as $r \to 0$, $\eta$ can be chosen so that the Lipschitz constant of $\phi \eta$ converges to the Lipschitz constant of $\phi$.
Thus for any $\delta > 0$, we can build a Lipschitz domain which differs from $\O$ only in $B$, is flat in $B_r$ (for a certain $r$ which depends on $\delta$), and whose Lipschitz constant differs from the Lipschitz constant of $\O$ by less than $\delta$.
\end{rmk}

We further remark that although \Cref{T:OneSplit} shows how to split one  eigenvalue, the perturbation chosen might cause a couple of two other eigenvalues to overlap, creating a new repeated eigenvalue.
To avoid this problem we need a finer control on the behavior of the whole spectrum; this is what is achieved in the following lemma.

\begin{lemma}
\label{T:SingleSplit}
Consider $\e >0$, $x$ a point on the boundary $ \pd \O$, and $\l_r$ the first eigenvalue of $\O$ of multiplicity $m \geq 2$.
Then for any $M > 0$ there exists a Lipschitz domain $\tilde \O$, whose eigenvalues we indicate as $\tilde \l_1 \leq \tilde \l_2 \leq \dots$, such that:
\begin{enumerate}
\item \label{T:SingleSplit:1}
the symmetric difference $\tilde \O \vartriangle \O$ is contained in the ball of radius $\e$ centered at $x$;
\item \label{T:SingleSplit:2}
for all $i \leq r+m+1$, it holds $|\tilde \l_i - \l_i| \leq M d_r$, where $d_r$ is the minimum positive number of the set $\bra{\l_{j+1} - \l_{j} : j=1, \dots, r+m} $;
\item \label{T:SingleSplit:3}
the multiplicity of $\tilde \l_r$ is strictly smaller than the multiplicity of $\l_r$;
\item \label{T:SingleSplit:4}
for all $i > r+m$, it holds $ \tilde \l_i > \l_{r} $.
\end{enumerate}
\end{lemma}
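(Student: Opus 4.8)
The plan is to combine \Cref{T:OneSplit} (to lower the multiplicity of $\l_r$) with \Cref{T:QuantEstEigVar} (to keep the rest of the spectrum under control), choosing all free parameters small enough. First I would set up the target accuracy. Let $d_r$ be as in the statement, and fix a small threshold $\kappa > 0$, to be specified, which will bound $|\phi_t - id|_{C^1}$. By \Cref{T:RellichNagy} applied at $\l = \l_r$, there is a labelling $\l_t^1 \le \dots \le \l_t^m$ of the eigenvalues bifurcating from $\l_r$, and a $T$ (depending on $\kappa$) such that for $t<T$ these are the only eigenvalues of $\phi_t(\O)$ in $(\l_r - \delta, \l_r + \delta)$, where $\delta$ can be taken smaller than $d_r/2$. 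Within this setup, by \Cref{T:OneSplit} (and, if needed, \Cref{T:LocPertFlat} to first flatten a piece of $\pd\O$ near $x$ inside $B_\e(x)$) I can choose a family $(\phi_t)_{t\in(0,t_0)}$ supported in $B_\e(x)$, with $|\phi_t - id|_{C^1} < \kappa$, and with $\l_t^i \ne \l_t^j$ for at least one pair $i,j$ for all small $t$ — indeed \Cref{T:OneSplit} gives $\dot\l_0^i \ne \dot\l_0^j$, so near $t=0$ those two branches separate. This already gives a $\tilde\O = \phi_t(\O)$ for which the multiplicity of the eigenvalue near $\l_r$ is strictly less than $m$, which will be Point~\ref{T:SingleSplit:3} once we verify that $\tilde\l_r$ is indeed the perturbed copy of $\l_r$.

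Next I would cash in \Cref{T:QuantEstEigVar}. Its conclusion, for the fixed labels $i$ running over $1,\dots,r+m+1$ (a finite set), reads $|\tilde\l_i - \l_i| \le C \max\{\tilde\l_i,\l_i\}\,|\phi_t - id|_{C^1}$, where $C$ depends only on the Lipschitz constants of $\pd\O$ and of $\phi_t$; by \Cref{T:LocPertFlat} the latter stays within $\delta$ of the former, so $C$ is bounded uniformly in $t$. Since the indices are bounded by $r+m+1$, both $\l_i$ and $\tilde\l_i$ are bounded by a constant $L$ depending only on $\O$, $r$, $m$ (using that the $\tilde\l_i$ depend continuously on $t$ and converge to $\l_i$ as $t\to0$, hence are, say, $\le \l_{r+m+1}+1$ for $t$ small). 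Therefore $|\tilde\l_i - \l_i| \le C L \kappa$ for all $i \le r+m+1$. Choosing $\kappa \le M/(CL)$ gives Point~\ref{T:SingleSplit:2}. Point~\ref{T:SingleSplit:1} is immediate from $\phi_t = id$ outside $B_\e(x)$. For Point~\ref{T:SingleSplit:4}, I would use the monotonicity/continuity of the spectrum together with \Cref{T:RellichNagy}: the eigenvalues $\l_t^1,\dots,\l_t^m$ stay in $(\l_r-\delta,\l_r+\delta)$ with $\delta<d_r/2$, so they do not leave the cluster around $\l_r$; meanwhile, by the quantitative estimate the eigenvalue $\tilde\l_{r+m+1}$ sits within $CL\kappa$ of $\l_{r+m+1} \ge \l_r + d_r > \l_r$, and by choosing $\kappa$ smaller still we keep $\tilde\l_{r+m+1} > \l_r$; since the $\tilde\l_i$ are non-decreasing, the same holds for all $i > r+m$ once we check the bookkeeping of indices.

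The main subtlety — and the step I expect to cost the most care — is the index bookkeeping: I must verify that, after the perturbation, the eigenvalue labelled $\tilde\l_r$ really is (one of) the branch(es) emanating from $\l_r$, so that Point~\ref{T:SingleSplit:3} is about the right eigenvalue, and that no eigenvalue from below $\l_r$ (i.e. with index $<r$) or from above the cluster drifts in to occupy slot $r$. This is where the smallness of $\delta$ relative to $d_r$ is essential: since $\l_{r-1} < \l_r$ (by definition of $\l_r$ being the \emph{first} eigenvalue of multiplicity $m$, actually we need $\l_{r-1}<\l_r$, which holds because otherwise $\l_{r-1}$ would already have multiplicity $\ge m$ — wait, one must be slightly careful here, but in any case the gap $\l_r - \l_{r-1}$ is positive and is one of the numbers whose minimum is $d_r$ if $r\ge2$), the branches from $\l_{r-1}$ and below stay at distance $>\delta$ from $\l_r$, and likewise from above. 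Combining this with \Cref{T:RellichNagy}'s statement that $\l_t^1,\dots,\l_t^m$ are the \emph{only} eigenvalues of $\phi_t(\O)$ in $(\l_r-\delta,\l_r+\delta)$ pins down that $\{\tilde\l_r,\dots,\tilde\l_{r+m-1}\} = \{\l_t^1,\dots,\l_t^m\}$, and then \Cref{T:OneSplit} tells us at least two of these are distinct, which forces the multiplicity of $\tilde\l_r$ down. Finally I would assemble the choices: pick $\kappa$ small enough to satisfy simultaneously the bound $M/(CL)$ for Point~\ref{T:SingleSplit:2}, the requirement $\tilde\l_{r+m+1}>\l_r$ for Point~\ref{T:SingleSplit:4}, and the smallness needed so that $|\phi_t-id|_{C^1}<\kappa$ is compatible with \Cref{T:OneSplit}; then pick $t$ small enough that the branches $\l_t^i,\l_t^j$ have already separated and $t<T$; set $\tilde\O = \phi_t(\O)$.
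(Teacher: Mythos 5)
Your proposal is correct and follows essentially the same strategy as the paper: flatten a piece of $\pd\O$ near $x$ as in \Cref{T:LocPertFlat}, split the cluster at $\l_r$ via \Cref{T:OneSplit} and \Cref{T:RellichNagy}, and control the finitely many eigenvalues of index $\leq r+m+1$ via \Cref{T:QuantEstEigVar}; your index bookkeeping for Point~\ref{T:SingleSplit:3} (using that the $m$ branches are the only eigenvalues in $(\l_r-\delta,\l_r+\delta)$ and that lower and higher eigenvalues move by less than $d_r/2$) is in fact spelled out more carefully than in the paper. The one place where you genuinely diverge is Point~\ref{T:SingleSplit:4}: the paper controls \emph{all} eigenvalues of index $i>r+m$ simultaneously by combining the growth $\l_i = Ci^{2/N}+o(i^{2/N})$ from Weyl's law with the $i$-dependent bound of \Cref{T:QuantEstEigVar}, whereas you simply observe that the sequence $\tilde\l_i$ is non-decreasing, so it suffices to keep the single eigenvalue $\tilde\l_{r+m+1}$ above $\l_r$, which follows from Point~\ref{T:SingleSplit:2} once $M<1$ (and one may assume $M<1$ without loss of generality, since the conclusion for smaller $M$ implies it for larger $M$). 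Your argument is simpler and avoids Weyl's law entirely; both are valid.
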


\begin{proof}
Let $B_\e$ be the ball of radius $\e$ centered at $x$ and let $\Sigma = B_\e \cap \pd \O$.
With the same construction of \Cref{T:LocPertFlat} and of the proof of \Cref{T:OneSplit}, we can build $(\O_t)_{t \in (0,t_0)}$ a family of perturbations of $\O$ obtained by a deformation of the boundary of $\O$  localized in $B_\e$.
Let $\l_1^t, \l_2^t, \dots$ indicate the sequence of eigenvalues of $\O_t$, with associated eigenfunctions $u_1^t, u_2^t, \dots$.
By \Cref{T:RellichNagy} we can assume that $\l_i^t, u_i^t$ are analytic in $t$, that $\l_i^0 = \l_i$, and that $u_r^0 ,\dots , u_{r+m}^0$ is an orthonormal basis for the eigenspace of $\l_r$.
By \Cref{T:OneSplit}, there are two distinct indices $i$ and $j$ among $\{ r, \dots, r+m \}$,  such that for $t_0$ small enough
\begin{equation}
\label{proof:eq:SingleSplit:2}
\l_i^t \neq \l_j^t, \quad \text{ for } t \in (0, t_0).
\end{equation}
By the eigenvalue stability estimate of \Cref{T:QuantEstEigVar}, there is a $t_0$ small enough such that 
\begin{equation}
\label{proof:eq:SingleSplit:3}
| \l_i^t - \l_i| \leq M d_r, \quad \forall t < t_0, \forall i \in \bra{1 , \dots, r+m+1}.
\end{equation}
Let $C, C'$ indicate two constants which depend only on the dimension $N$, the Lipschitz constant of $\pd \O$  and the area of $\O$. 
By Weyl's asymptotic law, $\l_n = C n^{2/N} + o(n^{2/N})$ for any $n$.
Then, from the uniform estimate of \Cref{T:QuantEstEigVar}, for $i > r+m$ it holds
$$ \l_i^t - \l_r \geq (\l_i^t - \l_i) + \l_i - \l_r \geq C'(-C c i^{2/N} +i^{2/N} - r^{2/N}),$$
where $c > 0$ is a bound on the deformation magnitude (which we can choose arbitrarily small) as in \eqref{proof:eq:boundc}.
Therefore for $t_0$ and $c$ small enough,
\begin{equation}
\label{proof:eq:SingleSplit:4}
\l_i^t - \l_r > 0, \quad \forall t < t_0, \forall i > r+m.
\end{equation}
In conclusion, taking $\tilde \O \deq \O_t$ for a certain $t$ small enough, Point \ref{T:SingleSplit:1} of the thesis holds by construction while Points \ref{T:SingleSplit:2}-\ref{T:SingleSplit:3}-\ref{T:SingleSplit:4} are consequences of \eqref{proof:eq:SingleSplit:3}-\eqref{proof:eq:SingleSplit:2}-\eqref{proof:eq:SingleSplit:4}.
\end{proof}

The construction in the previous proof gives us a method to split the first non-simple eigenvalue without altering the simplicity of smaller eigenvalues.
In fact by taking $M < 1/2$, from Points \ref{T:SingleSplit:2} and \ref{T:SingleSplit:4} of \Cref{T:SingleSplit} we have that the eigenvalues $\tilde \l_i$ perturbed from  $\l_i$:
\begin{itemize}
\item lie in disjoint neighborhoods of $\l_i$, for $i < r$;
\item  are not further than $d_r/2$ from $\l_i$, for $r \leq i \leq r+m $;
\item  are larger than $\l_r$, for $i > r+m$.
\end{itemize}
Therefore $\tilde \l_1, \dots, \tilde \l_{r-1}$ must still be simple.
We can iterate this procedure to split the whole spectrum as in the following proof.

\begin{proof}[Proof of \Cref{T:EllipticLocalSpectrumGen}]
Denoting as $B_\e$ the ball of radius $\e$ centered at $x$, let $\Sigma = B_\e \cap \pd \O$.
As in \Cref{T:LocPertFlat}, for any $\delta > 0$, we can modify $\Sigma$ into $\Sigma'$ so that an open subset of $\Sigma'$ is contained in a hyperplane and the Lipschitz constant of $\Sigma'$ differs from the Lipschitz constant of $\Sigma$ by less than $\delta$.
Let $(B_n)_{n \in \NN}$ be a sequence of disjoint balls of radius $c 2^{-n}$ with centers on $\Sigma'$ and contained in $B_\e$, with $c$ small enough so that $\Sigma' \cap \bigcup_n B_n$ is flat.
In each $B_n$ we deform $\Sigma'$ with a diffeomorphism $\phi_n$ built as in the proof of \Cref{T:OneSplit}.
We obtain this way a sequence of domains $(\O_n)_{n \in \NN}$ such that the thesis of \Cref{T:SingleSplit} holds with $\O, \tilde \O, B, M$ replaced respectively by $\O_n, \O_{n+1}, B_n, M_n$ for each $n$, where for $M_n$ we take a constant smaller than $1/2^{n+1}$.
Additionally, we can take $\phi_n$ such that $|\phi_n - id|_{C^1} \leq \delta/n$. And thus as $n \to \infty$, $\O_n $ converges to a domain $\tilde \O $ with Lipschitz constant not farther than $\delta$ from the Lipschitz constant of $\O$.

Let $r_n$ be the index of the first non-simple eigenvalue of $\O_n$.
By Points \ref{T:SingleSplit:2} and \ref{T:SingleSplit:4} of \Cref{T:SingleSplit} we have that all eigenvalues with index smaller than $r_n$ are simple for any $n$.
Moreover $r_n $ is a non-decreasing sequence of integers which cannot be definitely constant; in fact by Point \ref{T:SingleSplit:3} of \Cref{T:SingleSplit}, $r_{n+j}$ can be equal to $r_n$ for at most $j \in \bra{1, \dots, r_n}$.
Therefore $r_n \to \infty$ as $n \to \infty$, and thus $\tilde \O$ can have only simple eigenvalues.
\end{proof}

%As a final remark, we mention that we are faithful that a result analogue to \Cref{T:EllipticLocalSpectrumGen} also holds for more general elliptic operators and does not depend on the particular choice of boundary conditions.

\bibliographystyle{abbrvurl}
\bibliography{ref}

\end{document}